\newcommand\s{\boldsymbol{\sigma}}
\renewcommand\t{\boldsymbol{\tau}}
\newcommand\RE{\mathbb R}
\newcommand\Ld{L^2(\Omega)}
\newcommand\bLd{\mathbf{L}^2(\Omega)}
\newcommand\Hdiv{\mathbf{H}(\Div;\Omega)}
\DeclareMathOperator*{\Div}{\textrm{div}}
\newcommand\T{\mathcal{T}}
\newcommand\E{\mathcal{E}}
\newcommand\x{\mathbf x}
\newcommand\proj{\mathcal{P}_h}
\newcommand\Upost{U_h^*}
\newcommand\Upostpost{U_h^{**}}
\newcommand\upost{u_h^*}
\newcommand\upostpost{u_h^{**}}
\DeclareMathOperator*{\Grad}{\boldsymbol{\nabla}}
\newcommand*{\jump}[1]{\lbrack\hspace{-2pt}\lbrack%
#1\rbrack\hspace{-2pt}\rbrack}
\newcommand\dofs{N}
\newtheorem{thm}{Theorem}
\numberwithin{equation}{section}
\begin{document}

\title{A posteriori error analysis for the mixed Laplace eigenvalue
problem}

\author{Fleurianne Bertrand      \and Daniele Boffi
       \and    Rolf Stenberg}
        
\institute{Fleurianne Bertrand  \at
           Institut f\"ur Mathematik, Humboldt Universit\"at zu Berlin, Germany\\
\email{fb@math.hu-berlin.de}         
           \and Daniele Boffi 
            \at
            Dipartimento di Matematica ``F. Casorati'', Universit\`a di Pavia,
Italy and Department of Mathematics and System Analysis, Aalto University,
Finland\\
\email{daniele.boffi@unipv.it}
    \and
    Rolf Stenberg \at Department of Mathematics and Systems Analysis,
Aalto University, P.O. Box 11100, 00076 Aalto, Finland\\
    \email{rolf.stenberg@aalto.fi}  
}

\date{Received: date / Accepted: date}

\maketitle

\begin{abstract}
This paper derives a posteriori error estimates for the mixed numerical approximation of the Laplace eigenvalue problem with homogeneous Dirichlet boundary conditions. In particular, the
resulting error estimator constitutes an upper bound for the error and is shown to be local efficient. Therefore, we present a reconstruction in the standard $H_0^1$-conforming space for the primal variable of the mixed Laplace eigenvalue problem. This reconstruction is performed locally on a set of vertex patches. 
\end{abstract}

\section{Introduction}

Eigenvalue problems arise in countless applications and the precise numerical approximation 
of eigenvalues and eigenvectors of elliptic operators is therefor crucial. Finite element methods 
for these problems have been widely used and analyzed under a general framework in many 
works, e.g. in \cite{BabOsb:91} and the references therein. 
The Laplace problem was shown to be an interesting model problem worth to start with.
Convergence and optimal a priori estimates for both eigenvalues and
eigenfunctions are given e.g. in \cite{acta,bbf}. 

It is well-known that on general domains, the eigenfunction can not excepted to be sufficiently regular to use these a priori estimates. Therefore, it is important to use adaptive procedures based on a posteriori error estimators in order to retain the optimal convergence order and several approaches have been considered to construct estimators based 
 on the residual equation presented in \cite{AinOde:93} and \cite{MR3059294}.
In particular 
 \cite{DurPadRod:03}, obtained an a posteriori error estimator for the linear 
finite element approximation of second order elliptic eigenvalue problems using
a residual type error estimator.

For the study of the optimal convergence rates of these procedures, important progress has been made during the last decades. The crucial marking provided in \cite{MR1393904} allows  the error measured in the energy norm to decrease at a constant rate in each step until a prescribed error bound is reached. \cite{MR1770058} showed that the strict error reduction used therfor cannot be expected in general and proved convergence by introducing the concept of data oscillation and the interior node property. \cite{MR2324418} provided a new overall theoretical understanding of adaptive FEM in order to realize optimal computational complexity.
\cite{MR2421046} proved  the optimal cardinality of the AFEM using a decay between consecutive loops.  A survey and an axiomatic presentation of the proof of optimal convergence rates for adaptive finite element methods can be found in \cite{MR2648380,MR3076038,CarFeiPagPra:14}. For non-conforming elements, see \cite{MR2595052}.

An alternative approach for a posteriori error estimation is based on the hypercirle identity 
dating back at least as far as \cite{LadLeg:83} and \cite{PraSyn:47}. In fact, using a flux 
reconstruction of the primal variable of the source problem usually leads to guaranteed, easily,
 fully, and locally computable, upper bound on the error measured in the energy norm, see e.g. 
 \cite{BraSch:08,ErnVoh:15}.   A unified framework for a posteriori error estimation based on 
 stress reconstruction for the Stokes system is carried out in \cite{HanSteVoh:12}, and this was extended to the linear elasticity problem in \cite{BerMolSta:17}.
 
In addition to standard conforming Galerkin approximations, mixed finite elements are very 
popular since they provide a good approximation of the eigenvalues and eigenfunctions, see  \cite{acta,bbf}. A study of optimal rates of convergence was considered in \cite{MR2772091} using a residual based error estimator. For the mixed scheme however, the hypercircle identity can be used as an alternativ approach in order to obtain a considerably better approximation for the scalar variable of the source problem, see \cite{StM2an91}. This paper aims to extend this approach to the eigenvalue.

Mixed finite element methods are an alternative to standard conforming methods. The methods yield an accurate approximation to the flux, whereas that of the scalar variables is less accurate. However, the discrete solution can be used in order to obtain a considerable better approximation to the scalar variable by local post processing schemes, as first shown in \cite{MR813687}. The postprocessing can also be used to obtain an efficient  a posteriori error estimator for the source problem, cf. \cite{MR2240629}. In this paper, we will use the post processing combined with the idea behind the hypercircle method in order to design an a posteriori estimator for the eigenvalue problem. We use the Raviart-Thomas-Nedelec mixed methods since for other main family, i.e. Brezzi-Douglas-Marini,   the approximation properties needed are not valid, cf. Remark \ref{BDMrem} below.

\section{Setting of the problem and preliminary results}

Let $\Omega$ be a polygon in $\RE^2$ or a polyhedron in $\RE^3$.
We consider the standard mixed formulation of the Laplace eigenvalue problem
with homogeneous Dirichlet boundary conditions: find $\lambda\in\RE$ and a non
vanishing $u\in\Ld$, and  $\s\in\Hdiv$, such that it holds%
\begin{equation}
\left\{
\aligned
&(\s,\t)+(\Div\t,u)=0&&\forall\t\in\Hdiv\\
&(\Div\s,v)=-\lambda(u,v)&&\forall v\in\Ld.
\endaligned
\right.
\label{eq:mixed}
\end{equation}

Given a triangulation $\T_h$ of $\Omega$ and an integer $k\ge0$, we define
$\Sigma_h$ as the standard Raviart--Thomas space of order $k$ (sometimes
called Raviart--Thomas-N\'ed\'elec space)
\begin{equation}
\Sigma_h=\{\t\in\Hdiv:\t|_K\in[P_k(K)]^d\oplus\x\tilde P_k(K)\ \forall
K\in\T_h\},
\end{equation}
where $\tilde P_k(K)$ is the space of homogeneous polynomials of degree $k$,
and $U_h$ as the space of discontinuous piecewise polynomials of  $k$
\begin{equation}\label{scalarFE}
U_h=\{v\in\Ld:v|_K\in P_k(K)\ \forall K\in\T_h\}.
\end{equation}
It is well known (see for instance~\cite{acta,bbf} for a review) that the
following scheme provides a good approximation of the eigenvalues and
eigenfunctions of~\eqref{eq:mixed}: find $\lambda_h\in\RE$ and a non
vanishing $u_h\in U_h$, and $\s_h\in\Sigma_h$,  such that it holds
\begin{equation}
\left\{
\aligned
&(\s_h,\t)+(\Div\t,u_h)=0&&\forall\t\in\Sigma_h\\
&(\Div\s_h,v)=-\lambda_h(u_h,v)&&\forall v\in U_h.
\endaligned
\right.
\label{eq:mixedh}
\end{equation}

Let $(\lambda,u,\s)$ be a solution of~\eqref{eq:mixed} corresponding to a
simple eigenvalue. In general there exists $s>1/2$ such that $u\in
H^{1+s}(\Omega)$ and $\s\in\mathbf{H}^s(\Omega)\cap\Hdiv$.
We have that there exist a discrete solution
$(\lambda_h,u_h,\s_h)$ of~\eqref{eq:mixedh} such that
\begin{equation}
\aligned
&|\lambda-\lambda_h|\le Ch^{2r}|u|_{H^{r+1}(\Omega)}\\
&\|u-u_h\|_{\Ld}
+\|\s-\s_h\|_{\mathbf{L}^2(\Omega)}\le Ch^r|u|_{H^{r+1}(\Omega)}\\
&\|\s-\s_h\|_{\mathbf{H}(\Div;\Omega)}\le Ch^r|u|_{H^{r+1}(\Omega)}
\endaligned
\label{eq:errest}
\end{equation}
with $r=\min(s,k+1)$.

\begin{remark}

Estimate~\eqref{eq:errest} requires a careful choice of $u_h$ and $\s_h$.
It is well known that this result can be achieved with an appropriate
normalization and choice of the sign. The case of eigenvalues with
multiplicity higher than one needs more attention $($see, for instance, the
discussion in~\cite[Sec.~3.1]{acta}$)$ and it has been recently observed
$($see~\cite{gallistl}$)$ that an effective a posteriori analysis should consider
clusters of eigenvalues. In~\cite{bggg} a complete analysis of an adaptive
scheme for the problem under consideration has been performed in the framework
of clusters approximation; for the sake of readability, in this paper we
discuss the case of a simple eigenvalue, being understood that our result can
be extended to the more realistic situation using the techniques
of~\cite{gallistl,bggg}.
\label{re:multiple}
\end{remark}

\begin{remark} In  \cite[Lemma 3.2]{MR1722056} the following equality is proved for mixed methods in general
\begin{equation}\label{magic}
\lambda-\lambda_h=\|\s-\s_h\|_{\mathbf{L}^2(\Omega)}^2-\lambda_h\|u-u_h\|_{\Ld}^2.
\end{equation}
\end{remark}
From this it follows that for the BDM method with the same space for the scalar unknown, contrary to what is the case for the source problem, the accuracy is the same as for the RT. Therefore we in this paper do not consider the BDM alternative.

Let $\proj:\Ld\to U_h$ denote the $\Ld$ projection. It has been shown
in~\cite{roberts} that for the source problems associated
with~\eqref{eq:mixed} and~\eqref{eq:mixedh} the term $\|\proj
u-u_h\|_{L^2(\Omega)}$ achieves a higher rate of convergence than the actual
error $\|u-u_h\|_{L^2(\Omega)}$ (see also~\cite[Sec.~7.4]{bbf}). Not
surprisingly, the same result holds true also for the eigenfunctions of
~\eqref{eq:mixed} and~\eqref{eq:mixedh}, even if the proof in the case of the
eigenvalue problem is not a trivial extension of the original one. The
following estimate has been first found in~\cite{gardini} (lowest order case,
using the equivalence with a non-conforming approximation) and then proved
in~\cite[Sec.~6.1]{bggg} (general case, notation suited for clusters of
eigenvalue) and in~\cite[Sec.~3]{rodolfo1} and~\cite[Lemma~10]{rodolfo2}
(these last two results are for a mixed formulation associated with the
Maxwell eigenvalue problem, but the same proof applies to our situation with
the natural modifications): there exists $\rho(h)$, tending to zero as $h$
goes to zero, such that
\begin{equation}
\|\proj u-u_h\|_{\Ld}\le\rho(h)
\left(\|u-u_h\|_{\Ld}+\|\s-\s_h\|_{\bLd}\right).
\label{eq:superconv}
\end{equation}
The value of $\rho(h)$ depends on the regularity of our problem. In our case
it is at least $\rho(h)=O(h^{1/2})$ and, when the domain is convex, we have
$\rho(h)=O(h)$.

We now recall the postprocessing procedure which has been introduced
in~\cite{StM2an91} in order to improve the approximation of $u$ in the case of
the source problem associated with~\eqref{eq:mixed}. Let $\Upost$ be the space
of polynomials of degree up to $k+1$
\[
\Upost=\{v\in\Ld:v|_K\in P_{k+1}(K)\ \forall K\in\T_h\}.
\]
The postprocessed solution $\upost\in\Upost$ is defined such that
\[
\left\{
\aligned
&P_h\upost=u_h\\
&(\Grad\upost,\Grad v)_K=(\s_h,\Grad v)_K\quad\forall v\in(I-P_h)\Upost|_K\
\forall K\in\T_h.
\endaligned
\right.
\]

A proof analogous to the one presented in~\cite{StM2an91} for the source
problem, together with the estimate~\eqref{eq:superconv}, can be applied to
the eigenvalue problem in order to obtain the following bounds for all
$k\ge0$: 
\[
\aligned
 \left(\sum_{K\in\T_h}\|\Grad u-\Grad\upost\|_{L^2(K)}^2\right)^{1/2}
&+\left(\sum_{\ell\in\E_h}h_\ell^{-1}
\|\jump{\upost}\|_{L^2(\ell)}^2\right)^{1/2}
\le Ch^r|u|_{H^{r+1}(\Omega)},\\
 \|u-\upost\|_{\Ld}&\le C\rho(h)h^r|u|_{H^{r+1}(\Omega)},
\endaligned
\]
where $\E_h$ is the set of edges in the triangulation $\T_h$, $h_\ell$ is the
length of $\ell$, $\jump{\cdot}$ denotes the jump, $\rho(h)$ comes
from~\eqref{eq:superconv}, and $r$ has been defined after
Equation~\eqref{eq:errest}. In particular, if the domain is convex, the right
hand side of the last estimate is $O(h^{r+1})$.

\begin{remark}

In~\cite{StM2an91} the last estimate has been proved for $k\ge1$.
However, it was observed that if the source term belongs to the discrete
space $U_h$, then the optimal bound holds true also in the lowest order case
$($see Theorem~2.1, Theorem~2.2, and Remark~2.1$)$. In our case, the right-hand
side $($corresponding to the source term in the framework of~\cite{StM2an91}$)$ is
equal to $\lambda_hu_h\in U_h$, so that the optimal bound is valid also when
$k=0$.

\end{remark}

The next step consists in building a second postprocessed solution which is
globally continuous. This will allow to consider its gradient over the entire
domain $\Omega$. The standard procedure consists in averaging the degrees of
freedom and it is sometimes called Oswald interpolation, being a particular
case of the more general case considered in~\cite{oswald}. For more details
the reader is referred to~\cite[Sec.~5.5.2]{dipietroern}, where the
construction is explained together with the relevant estimates.
Using the Oswald interpolation, starting from $\upost$ we can build an element
$\upostpost$ in
\begin{equation}
\Upostpost=\{v\in H^1_0(\Omega):v|_K\in P_{k+1}(K)\ \forall K\in\T_h\}
\end{equation}
such that
\begin{equation}
\aligned
&\|\Grad u-\Grad\upostpost\|_{\bLd}\le Ch^r|u|_{H^{r+1}(\Omega)}\\
&\|u-\upostpost\|_{\Ld}\le C\rho(h)h^r|u|_{H^{r+1}(\Omega)}.
\endaligned
\label{eq:higher}
\end{equation}
Moreover, the degrees of freedom on the boundary can be set equal to zero so
that $\upostpost$ belongs to $H^1_0(\Omega)$.

\section{A posteriori analysis}

We define the following local error estimator
\[
\eta(K)=\|\Grad\upostpost-\s_h\|_{\mathbf{L}^2(K)}
\]
and the corresponding global error estimator
\[
\eta=\left(\sum_{K\in\T_h}\eta(K)^2\right)^{1/2}=
\|\Grad\upostpost-\s_h\|_{\mathbf{L}^2(\Omega)}.
\]

It is immediate to see that the above error estimator is locally efficient
.

\begin{thm}[Local efficiency]
Let $(\lambda,u,\s)$ be a solution of~\eqref{eq:mixed} corresponding to a
simple eigenvalue $($see Remark~\ref{re:multiple}$)$ and $(\lambda_h,u_h,\s_h)$
the associated discrete solution in the spirit of estimate~\eqref{eq:errest}.
Then the following estimate holds true for each element $K$ in $\T_h$
\[
\eta(K)\le
\sqrt{2}\left(
\|\Grad u-\Grad\upostpost\|_{\mathbf{L}^2(K)}+\|\s-\s_h\|_{\mathbf{L}^2(K)}
\right)^{\frac 1 2}.
\]

\begin{proof}

The proof is immediate from $\Grad u=\s$ and the triangular inequality
\[
\eta(K)=\|\Grad\upostpost-\s_h\|_{\mathbf{L}^2(K)}\le
\|\Grad u-\Grad\upostpost\|_{\mathbf{L}^2(K)}+\|\s-\s_h\|_{\mathbf{L}^2(K)}.
\]

\end{proof}

\end{thm}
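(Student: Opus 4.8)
The plan is to reduce the claimed bound to the triangle inequality, the whole content being the consistency relation $\s=\Grad u$ of the continuous problem. First I would recover this strong relation from the weak form: using the first equation of~\eqref{eq:mixed} for arbitrary $\t\in\Hdiv$, together with the fact that $u\in H^{1+s}(\Omega)\cap H^1_0(\Omega)$ with $s>1/2$ (so that $\Grad u\in\bLd$ and integration by parts produces no boundary contribution), one obtains $(\s-\Grad u,\t)=0$ for all $\t\in\Hdiv$, hence $\s=\Grad u$ a.e.\ in $\Omega$. The only point requiring a moment's care here is the sign convention in~\eqref{eq:mixed}; once that is fixed, everything else is immediate.

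Given $\s=\Grad u$, I would then insert it into the definition of the estimator and split on the element $K$:
\[
\eta(K)=\|\Grad\upostpost-\s_h\|_{\mathbf{L}^2(K)}
=\|(\Grad\upostpost-\Grad u)+(\s-\s_h)\|_{\mathbf{L}^2(K)}
\le\|\Grad u-\Grad\upostpost\|_{\mathbf{L}^2(K)}+\|\s-\s_h\|_{\mathbf{L}^2(K)}.
\]
This already yields local efficiency: the first summand is controlled elementwise by the postprocessing estimate behind~\eqref{eq:higher} and the second by~\eqref{eq:errest}. To land on the precise stated form I would finish with the elementary inequality $a+b\le\sqrt2\,(a^2+b^2)^{1/2}$ applied to $a=\|\Grad u-\Grad\upostpost\|_{\mathbf{L}^2(K)}$ and $b=\|\s-\s_h\|_{\mathbf{L}^2(K)}$.

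I do not anticipate any real obstacle. The argument is entirely local and purely algebraic once $\s=\Grad u$ is in hand; no interpolation estimates, bubble functions, or inverse inequalities are needed, in contrast with residual-type efficiency proofs. The genuine work — namely that the two terms on the right-hand side are actually small — has already been carried out in~\eqref{eq:errest} and~\eqref{eq:higher}; here one only records that $\eta(K)$ is dominated by them.
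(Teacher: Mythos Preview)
Your argument is correct and matches the paper's proof: both reduce to $\s=\Grad u$ and the triangle inequality on $K$. You add the derivation of $\s=\Grad u$ from the weak form and the elementary step $a+b\le\sqrt{2}(a^2+b^2)^{1/2}$ to recover the stated constant, but the core is identical.
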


The following theorem provides a reliability estimate with constant equal to
one.

\begin{thm}[Global reliability]
\label{Globalreliability}
Let $(\lambda,u,\s)$ be a solution of~\eqref{eq:mixed} corresponding to a
simple eigenvalue $($see Remark~\ref{re:multiple}$)$ and $(\lambda_h,u_h,\s_h)$
the associated discrete solution in the spirit of estimate~\eqref{eq:errest}.
Then the following estimate holds true
\begin{align}
\label{eq:reliability}
\left(
\|\Grad u-\Grad\upostpost\|_{\bLd}^2+\|\s-\s_h\|_{\bLd}^2
\right)^{\frac 1 2}
\le\eta+hot(h),
\end{align}
where $hot(h)$ is a higher order term with respect to $h^r$ as $h$ goes to
zero.

\end{thm}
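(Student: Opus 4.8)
The plan is to exploit the hypercircle (Prager--Synge) structure: since $\Grad u = \s$ in $\Omega$ with $u-\upostpost \in H^1_0(\Omega)$ and $\s-\s_h \in \Hdiv$, the two errors $\Grad u - \Grad\upostpost$ and $\s - \s_h$ are "almost orthogonal" in $\bLd$, with the defect controlled by the eigenvalue residual. Concretely, I would start from the identity
\[
\|\Grad u-\Grad\upostpost\|_{\bLd}^2+\|\s-\s_h\|_{\bLd}^2
=\|\Grad\upostpost-\s_h\|_{\bLd}^2+2(\Grad u-\Grad\upostpost,\s-\s_h),
\]
which is just expansion of $\|(\Grad u-\Grad\upostpost)-(\s-\s_h)\|^2$ together with $\Grad u-\s_h = (\Grad u-\Grad\upostpost)+(\Grad\upostpost-\s_h)$... more precisely I would write $(\Grad u-\Grad\upostpost) - (\s-\s_h) = \Grad\upostpost - \s_h$ (using $\Grad u=\s$), square both sides, and rearrange. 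The first term on the right is exactly $\eta^2$, so it remains to bound the cross term $2(\Grad u-\Grad\upostpost,\s-\s_h)$ by $2\eta\cdot hot(h)$ plus a higher-order contribution, after which \eqref{eq:reliability} follows by a Young-type/quadratic-formula manipulation.

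The key step is estimating the cross term. I would integrate by parts on each element: since $w:=u-\upostpost\in H^1_0(\Omega)$,
\[
(\Grad w,\s-\s_h)
= -(w,\Div(\s-\s_h))
= -(w,\Div\s) + (w,\Div\s_h).
\]
Now $\Div\s = -\lambda u$ from \eqref{eq:mixed}, and testing the second equation of \eqref{eq:mixedh} with $\proj w \in U_h$ gives $(\Div\s_h,\proj w) = -\lambda_h(u_h,\proj w)$; since $\Div\s_h\in U_h$ we have $(w,\Div\s_h)=(\proj w,\Div\s_h)$. Substituting,
\[
(\Grad w,\s-\s_h)
= \lambda(u,w) - \lambda_h(u_h,\proj w).
\]
Then I would write $\lambda(u,w) - \lambda_h(u_h,\proj w) = \lambda(u,w) - \lambda_h(u_h,w) + \lambda_h(u_h,w-\proj w)$, bound the last term using $w - \proj w$ and the superconvergence-type estimate (or rather orthogonality of $\proj$ against $U_h$, noting $u_h\in U_h$, so in fact $(u_h,w-\proj w)=0$), leaving
\[
(\Grad w,\s-\s_h) = \lambda(u,w) - \lambda_h(u_h,w)
= (\lambda-\lambda_h)(u,w) + \lambda_h(u-u_h,w).
\]
Using $|\lambda-\lambda_h|\le Ch^{2r}|u|_{H^{r+1}}$, $\|u-u_h\|_{\Ld}\le Ch^r|u|_{H^{r+1}}$, $\|u\|_{\Ld}$ bounded, and $\|w\|_{\Ld}=\|u-\upostpost\|_{\Ld}\le C\rho(h)h^r|u|_{H^{r+1}}$ from \eqref{eq:higher}, each summand is $O(h^{2r})$ (indeed $O(\rho(h)h^{2r})$ and better), which is genuinely higher order than $h^r$, i.e. $hot(h)^2$ up to constants.

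The main obstacle is bookkeeping the cross term so that it ends up as a product $\eta\cdot hot(h)$ rather than merely $O(h^{2r})$ — one must decide whether to absorb it via Young's inequality (which would spoil the reliability constant $1$) or to keep the sharper form. The cleaner route is: set $e^2 := \|\Grad w\|^2 + \|\s-\s_h\|^2$, so $e^2 = \eta^2 + 2(\Grad w,\s-\s_h)$; by Cauchy--Schwarz $|(\Grad w,\s-\s_h)|\le \|\Grad w\|\,\|\s-\s_h\| \le \tfrac12 e^2$, which is too lossy, so instead bound $(\Grad w,\s-\s_h)$ directly by the higher-order quantity $\delta(h):=C\rho(h)h^{2r}|u|_{H^{r+1}}^2 + C h^{2r}|u|_{H^{r+1}}\|w\|_{\Ld}+\dots$ computed above, giving $e^2 \le \eta^2 + 2\delta(h)$; then $e \le \sqrt{\eta^2 + 2\delta(h)} \le \eta + \sqrt{2\delta(h)}$, and setting $hot(h) := \sqrt{2\delta(h)}$, which is $o(h^r)$, yields \eqref{eq:reliability}. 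The only subtlety requiring care is verifying that $P_h\upostpost$ relates correctly to $u_h$ — here one uses instead that $u_h\in U_h$ so $(u_h, w-P_h w)=0$ directly, bypassing any need for a discrete relation between $\upostpost$ and $u_h$; this makes the argument go through for all $k\ge 0$.
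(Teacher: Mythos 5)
Your proof is correct and takes essentially the same route as the paper: expand $\|\Grad\upostpost-\s_h\|_{\bLd}^2$ using $\Grad u=\s$, integrate by parts to turn the cross term into $(u-\upostpost,\Div(\s-\s_h))$, substitute $\Div\s=-\lambda u$ and $\Div\s_h=-\lambda_h u_h$, and bound the resulting product by $O(\rho(h)h^{2r})$ via \eqref{eq:higher} and \eqref{eq:errest}, concluding with $\sqrt{\eta^2+2\delta}\le\eta+\sqrt{2\delta}$ (a step the paper leaves implicit). Your detour through $\proj w$ is harmless but unnecessary, since $\Div\Sigma_h\subset U_h$ gives $\Div\s_h=-\lambda_h u_h$ directly; just note that the factor $\rho(h)$ from $\|u-\upostpost\|_{\Ld}$ is what makes $\sqrt{\delta(h)}=o(h^r)$, a plain $O(h^{2r})$ bound for the cross term would not suffice.
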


\begin{proof}

From $\Grad u=\s$ we have
\[
\aligned
\|\Grad\upostpost-\s_h\|_{\bLd}^2&=\|\Grad\upostpost  - \Grad u+\s-\s_h\|_{\bLd}^2\\
&=\|   \Grad\upostpost  - \Grad u  \|_{\bLd}^2+\|\s-\s_h\|_{\bLd}^2\\
&\quad+2(\Grad\upostpost  - \Grad u,\s-\s_h)_{\bLd}\\
&=\|    \Grad\upostpost  - \Grad u  \|_{\bLd}^2+\|\s-\s_h\|_{\bLd}^2\\
&\quad-2(\upostpost  -  u,\Div(\s-\s_h))_{\Ld}.
\endaligned
\]
Since $\Div\s=-\lambda u$ and $\Div\s_h=-\lambda_hu_h$, we obtain
\[
\|\Grad u-\Grad\upostpost\|_{\bLd}^2+\|\s-\s_h\|_{\bLd}^2\le
\eta^2+2\|u-\upostpost\|_{\Ld}\|\lambda u-\lambda_hu_h\|_{\Ld}.
\]
We now conclude the proof by showing that the last term of the previous
equation is a higher order term.

We have already seen in~\eqref{eq:higher} that $\|u-\upostpost\|_{\Ld}$ is of
order $O(\rho(h)h^r)$. The other term can be estimated as follows
\[
\aligned
\|\lambda u-\lambda_hu_h\|_{\Ld}&=
\|\lambda(u-u_h)+(\lambda-\lambda_h)u_h\|_{\Ld}\\
&\le\lambda\|u-u_h\|_{\Ld}+|\lambda-\lambda_h|\|u_h\|_{\Ld}\\
&\le C (h^r+h^{2r})|u|_{H^{r+1}(\Omega)},
\endaligned
\]
so that we finally get
\[
\|\Grad u-\Grad\upostpost\|_{\bLd}^2+\|\s-\s_h\|_{\bLd}^2\le
\eta^2+O(\rho(h)h^{2r}).
\]

\end{proof}

\section{Numerical illustration}

\begin{figure}[ht]\center
\includegraphics[width=\textwidth]{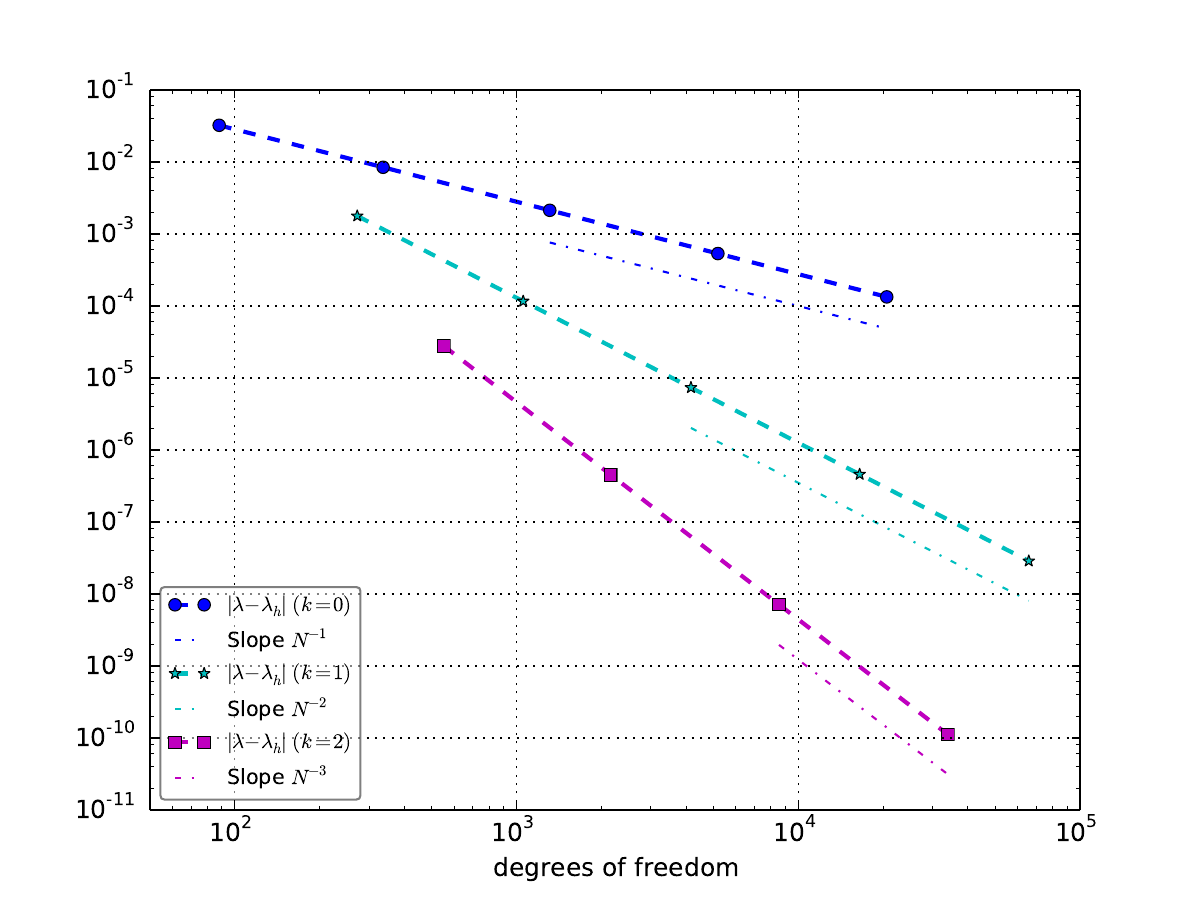}
\caption{Convergence of $\lambda$ (square mesh)}
\label{fig:unieig}
\end{figure}

This section is devoted to numerical results using uniform 
and adaptive $h$-refinements of triangular meshes 
for two-dimensional domains.

The aim of our tests is twofold: on one hand we want to confirm our theory
developed in the previous section; on the other hand we want to investigate
the convergence of the adaptive scheme based on the error indicator that we
have introduced.

We compute the solution of problem~\eqref{eq:mixedh} with Raviart--Thomas
elements with $k=0$ and $k=1$.

The first computations are carried on the square domain $[0,2\pi]^2$ 
with homogeneous Dirichlet boundary conditions imposed on the entire boundary, 
similarly to \cite{acta}. 
In this case, the smallest eigenvalue of \eqref{eq:mixed} is known to be $\lambda=2$.
A corresponding eigenfunction is given by $u(x,y)=\sin x\sin y $. Since the
eigenfunction is smooth, we can take $s=2$ in the error
estimates~\eqref{eq:errest}.
The approximation of the eigenvalue is illustrated in Figure \ref{fig:unieig}.
Since on a uniform mesh the number of degrees of freedom $\dofs$ is
proportional to $h^{-2}$, we can see that the eigenvalues converge with
optimal order $O(h^2)$ and $O(h^4)$ for $k=0$ and $k=1$, respectively. In Figure \ref{fig:unieta}, we see that the error estimator converges with the same rate. The corresponding convergence rates are summarized in Figure \ref{fig:conv}.

In order to investigate the efficiency of the error estimator with respect to
the natural norms, the efficiency index is illustrated
in Figure~\ref{fig:unietac}.

In Figure~\ref{fig:unihot} we check the validity of Theorem~\ref{Globalreliability}: in particular,
we compute the rate of the higher order terms involved with the reliability
estimate. More precisely, the lines in Figure~\ref{fig:unihot} refer to the
difference between the left and right hand sides of equation \eqref{eq:reliability}.


\begin{figure}\center
\includegraphics[width=\textwidth]{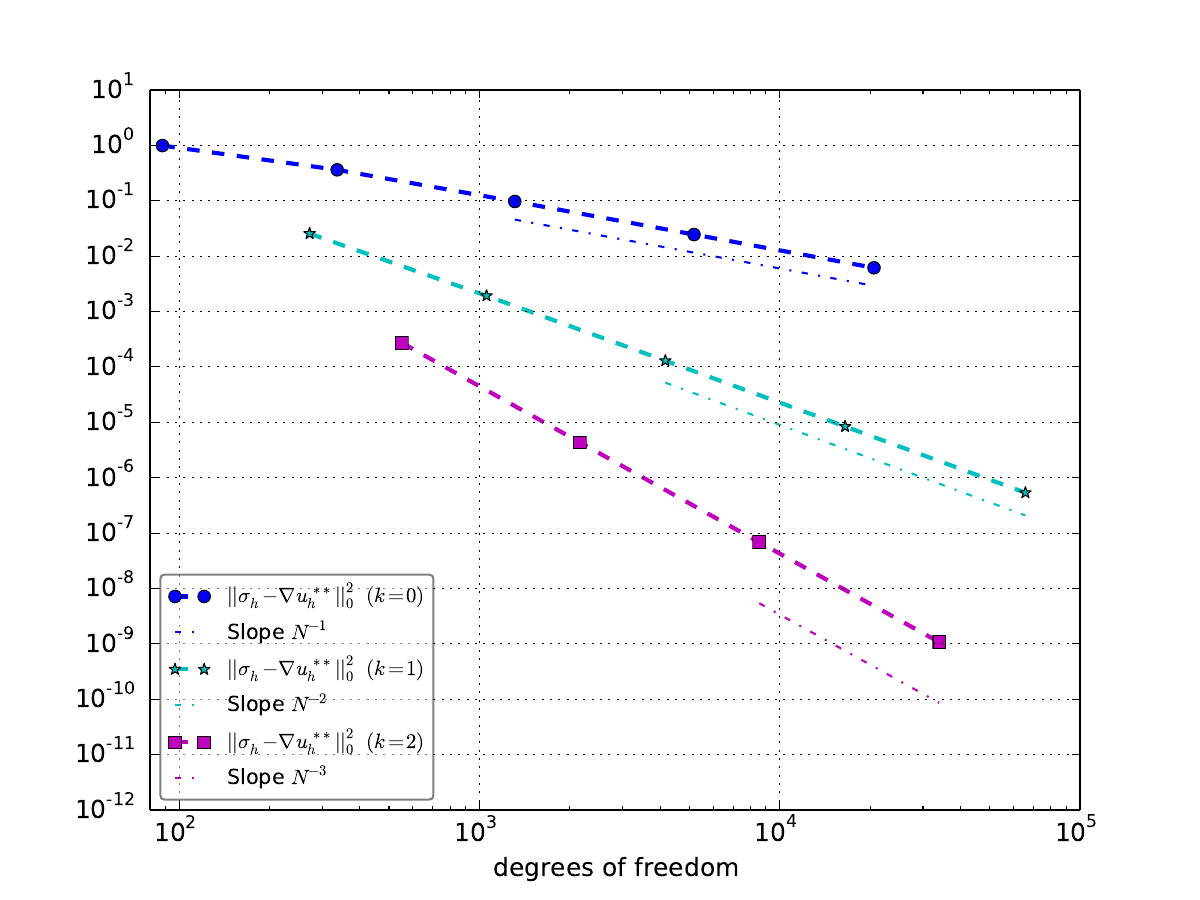}
\caption{Error estimator (square mesh).}
\label{fig:unieta}
\end{figure}

\begin{figure}\center
\begin{tabular}{l l | l l l l l l}
&$ k $& \\ \hline
$\vert\lambda - \lambda_h \vert$ & 0 &  1.002 &  1.010 &  1.007  &  1.004 \\
&1& 2.085&  2.016&  2.011&  2.006\\
&2&  3.018&  3.018&  3.011&  \\ \hline
$\|u-u_h\|_0^2$ & 0& 1.059&  1.025&  1.011&  1.005\\
&1&  2.017&  2.015&  2.009&  2.005\\
&2 & 3.025&  3.019&  3.011&  \\ \hline
$\| \boldsymbol \sigma_{h} - \nabla u^{**}_h \|_0^2$ 
& 0 & 0.753&  0.967&  0.999&  1.003\\
&1&  1.907&  1.968&  1.984&  1.992\\
&2&  3.032&  3.019&  3.011& \\
\end{tabular}
\caption{Convergence rates (uniformed refinement on square mesh).}
\label{fig:conv}
\end{figure}

\begin{figure}\center
\includegraphics[width=\textwidth]{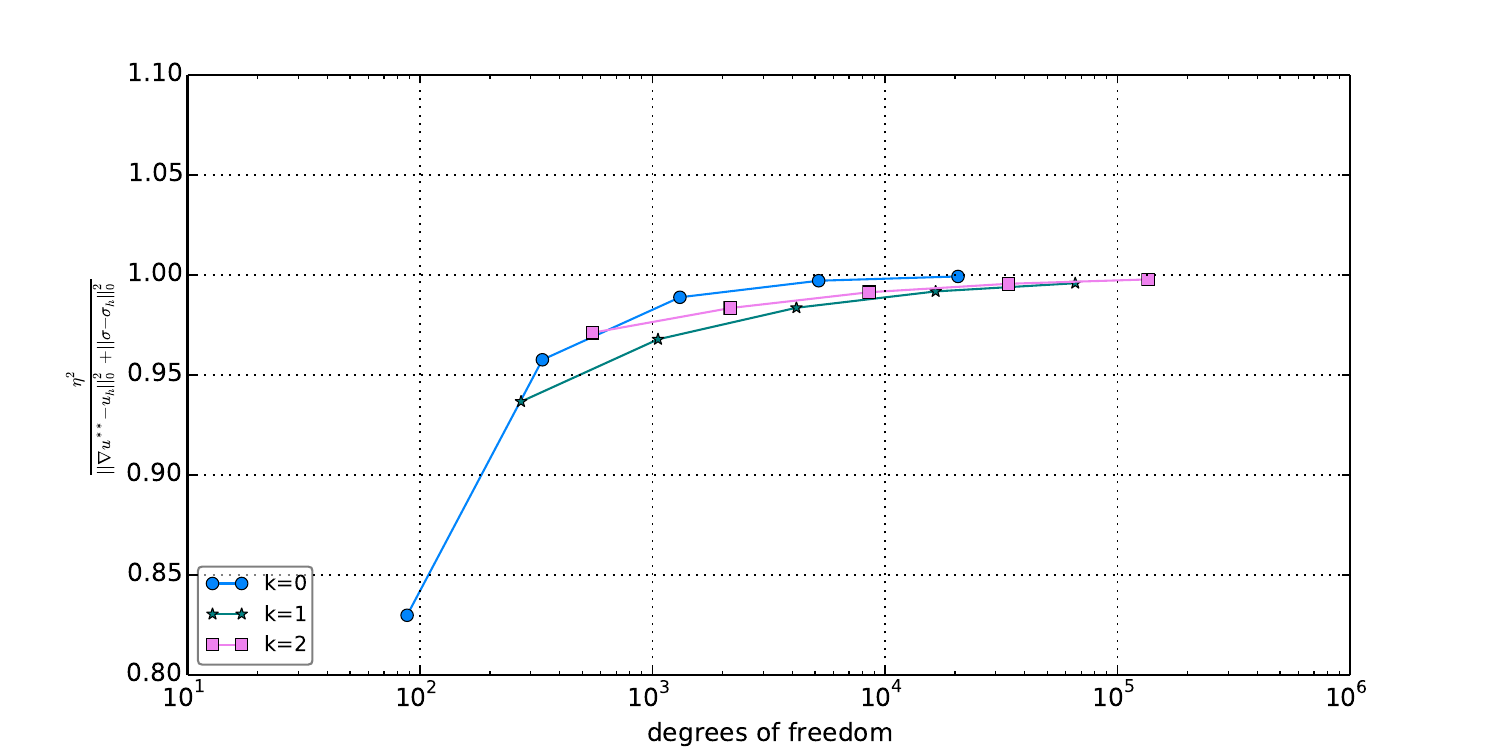}
\caption{Efficiency index (square mesh).}
\label{fig:unietac}
\end{figure}

\begin{figure}\center
\includegraphics[width=\textwidth]{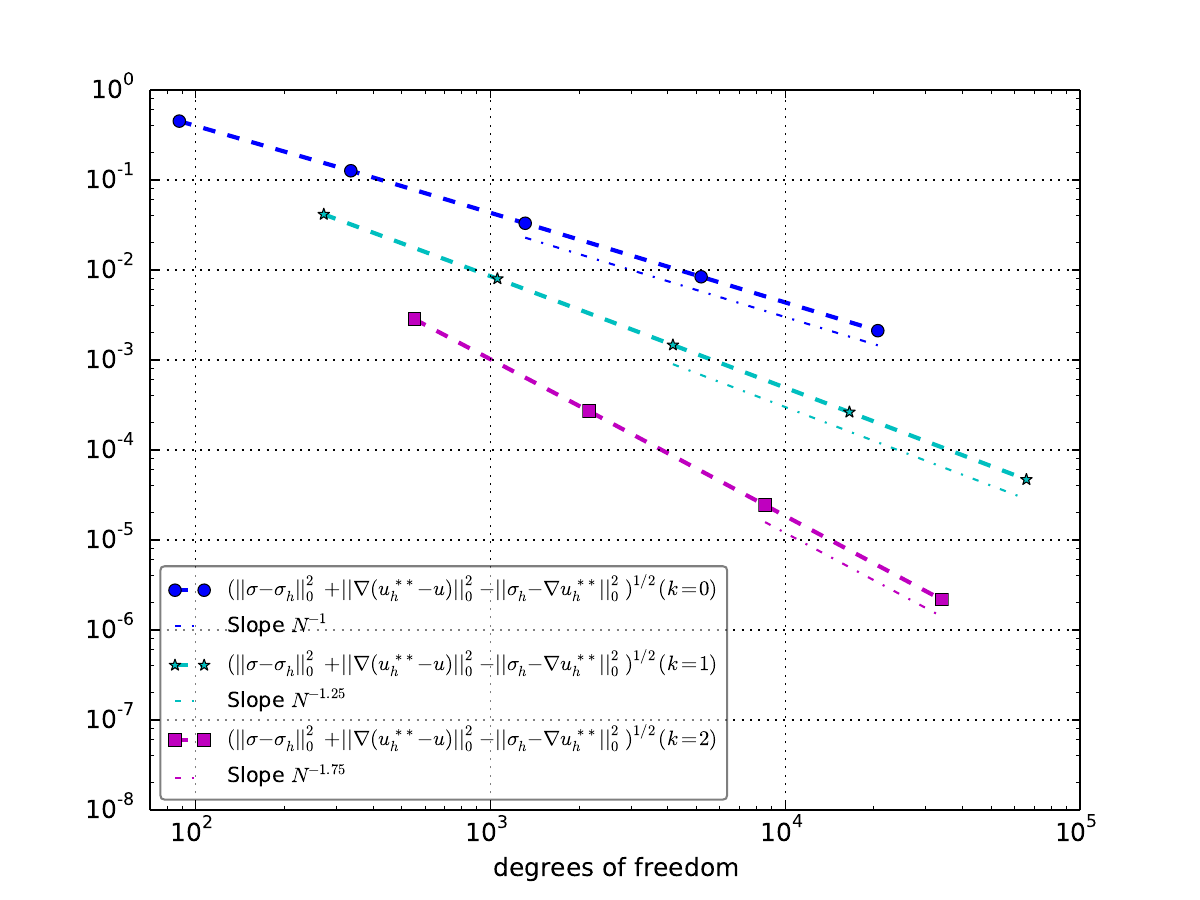}
\caption{Higher order terms in efficiency bound (square mesh).}
\label{fig:unihot}
\end{figure}

In order to illustrate the adaptive scheme, a second test case is concerned
with the L-shaped domain $[-1,1]^2 \backslash [0,1]^2$. Since the Sobolev
regularity is restricted by the biggest interior angle of the domain (see
classical regularity theory e.g. \cite{KozMazRos:97}), the lowest
eigenfunction can only be shown to be in $H^s$ for $s <4/3$ for our L-shaped
case. The convergence rate for the approximation of $\lambda$ for a uniform
refinement will be at best ${\dofs}^{-2/3}$, as illustrated in Figure
\ref{fig:adalambda}. There, we also see that the adaptive scheme is able to
retain the optimal convergence order. Similarly, the convergence rate for the
approximation of the eigenfunction for a uniform refinement will be at best
${\dofs}^{-1/3}$. In Figure \ref{fig:adaeta}, we see that the error estimator
converges with the optimal convergence rate. Figure \ref{fig:adamesh} shows
that the adaptive strategy refines the mesh where the singularity is expected,
i.e. in the reentrant corner.

\begin{figure}\center
\includegraphics[width=\textwidth]{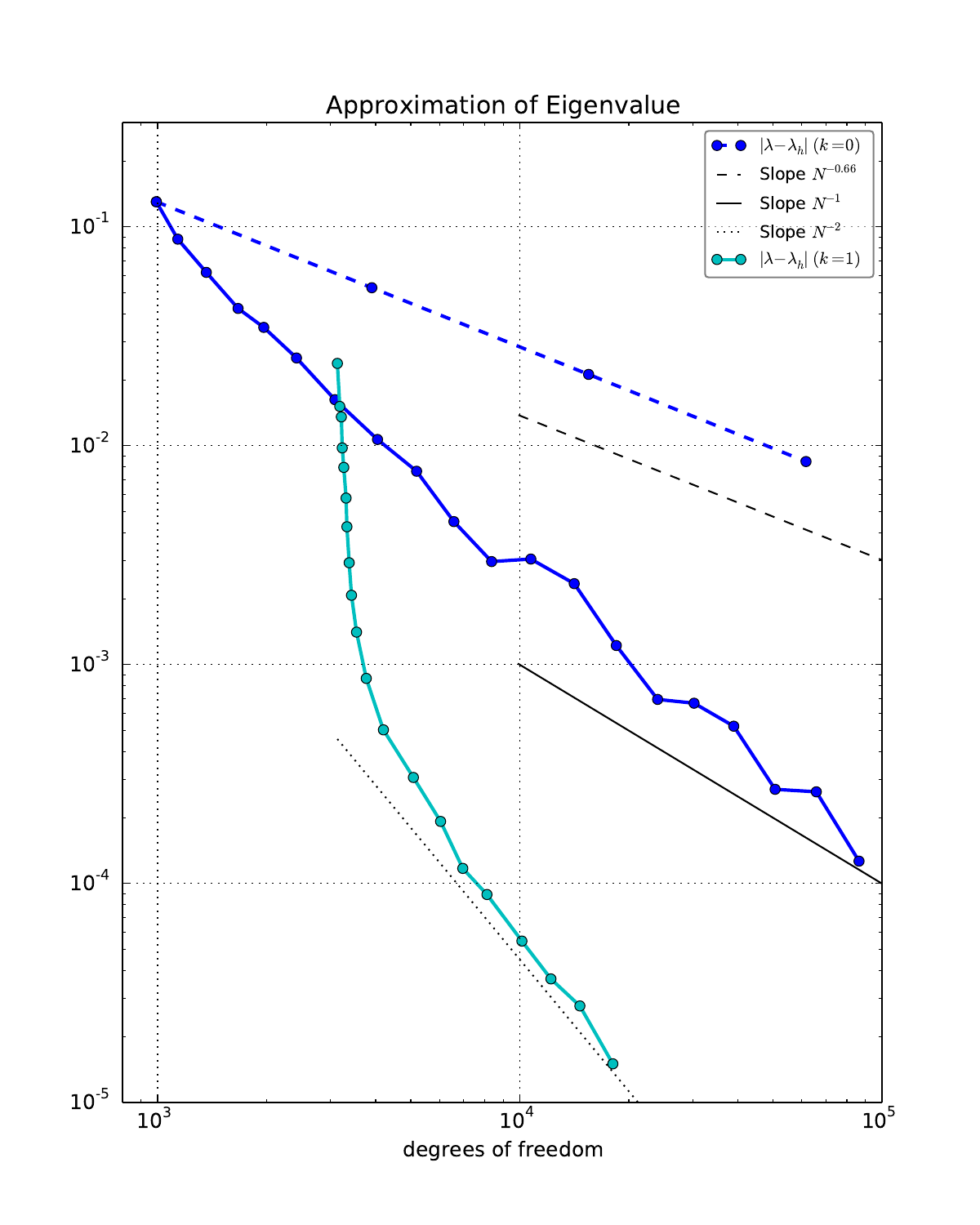}
\caption{Eigenvalue approximation on L-shaped domain.}
\label{fig:adalambda}
\end{figure}
\begin{figure}\center
\includegraphics[width=\textwidth]{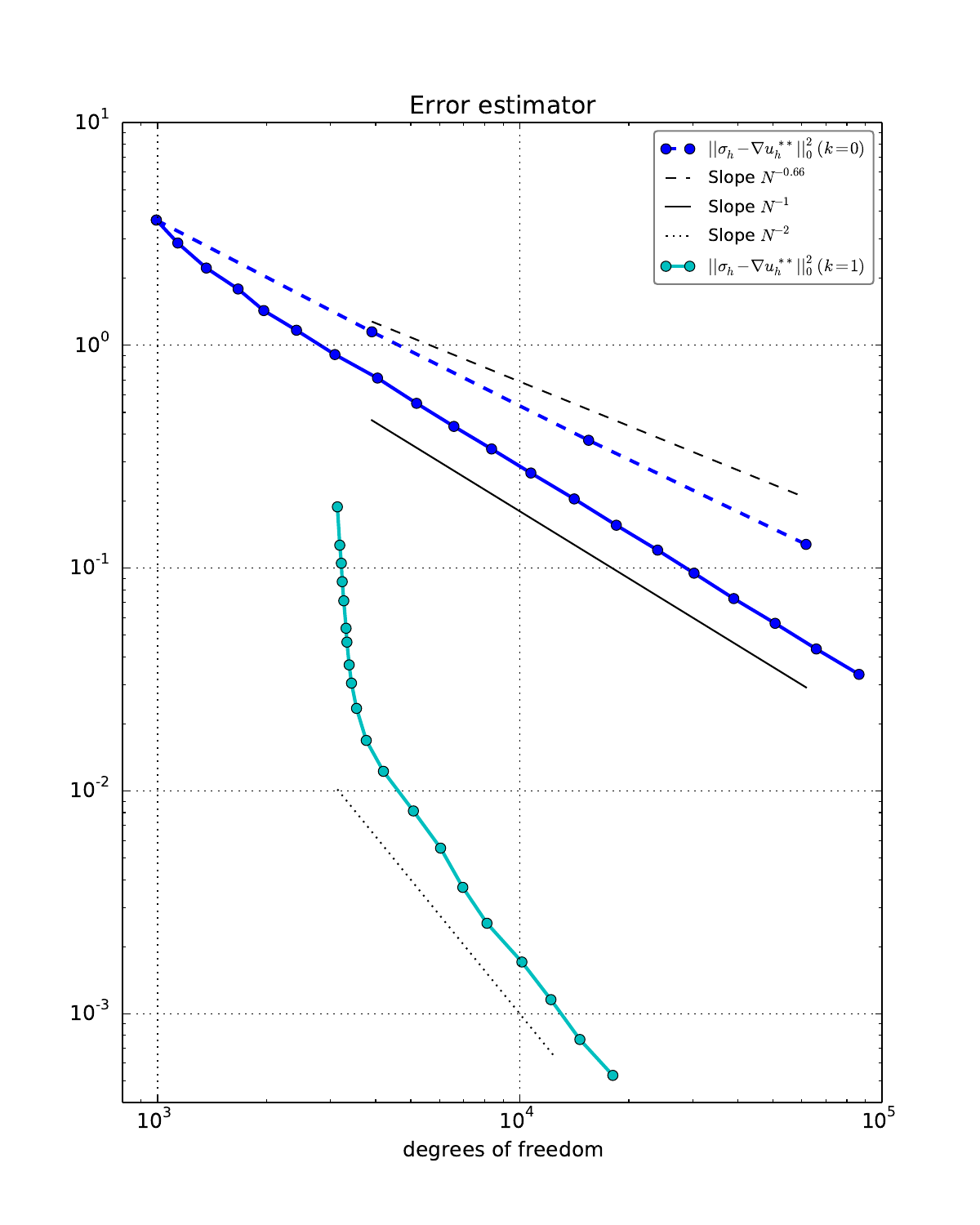}
\caption{Error estimator on L-shaped domain.}
\label{fig:adaeta}
\end{figure}
\begin{figure}\center
\includegraphics[width=0.3\textwidth, trim={10cm 4cm 10cm 4cm},clip]{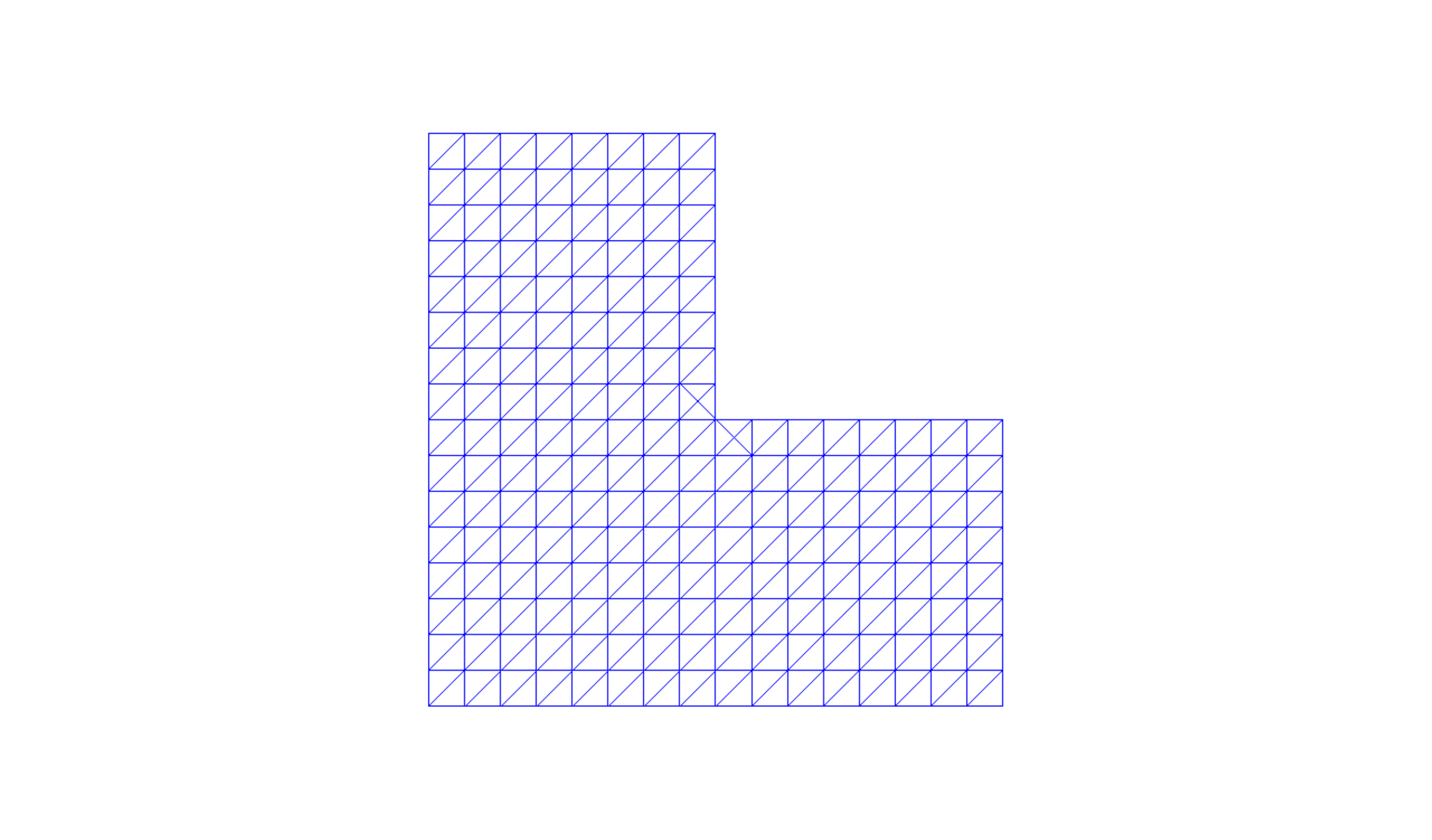}
\includegraphics[width=0.3\textwidth, trim={10cm 4cm 10cm 4cm},clip]{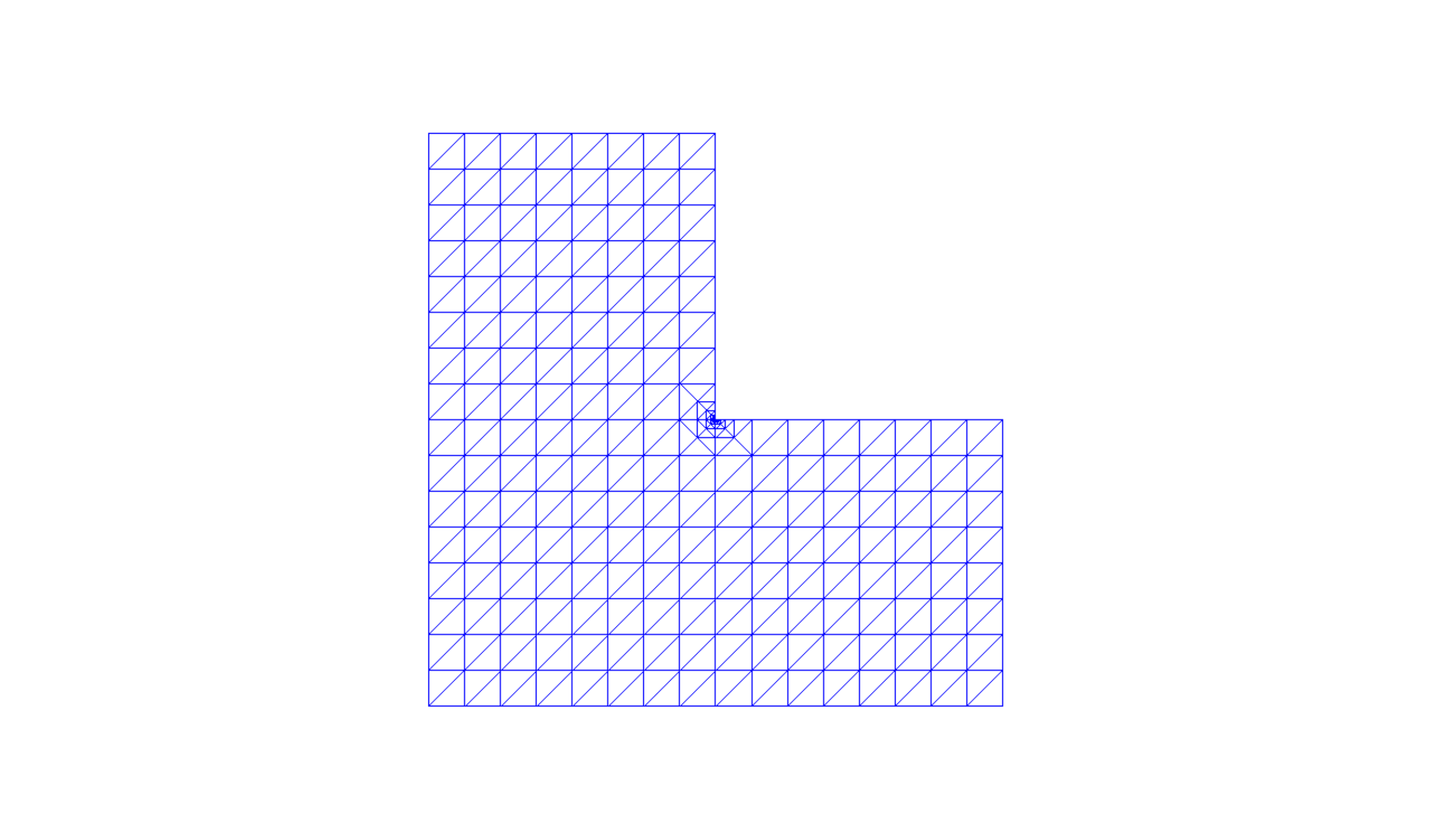}
\includegraphics[width=0.3\textwidth, trim={10cm 4cm 10cm 4cm},clip]{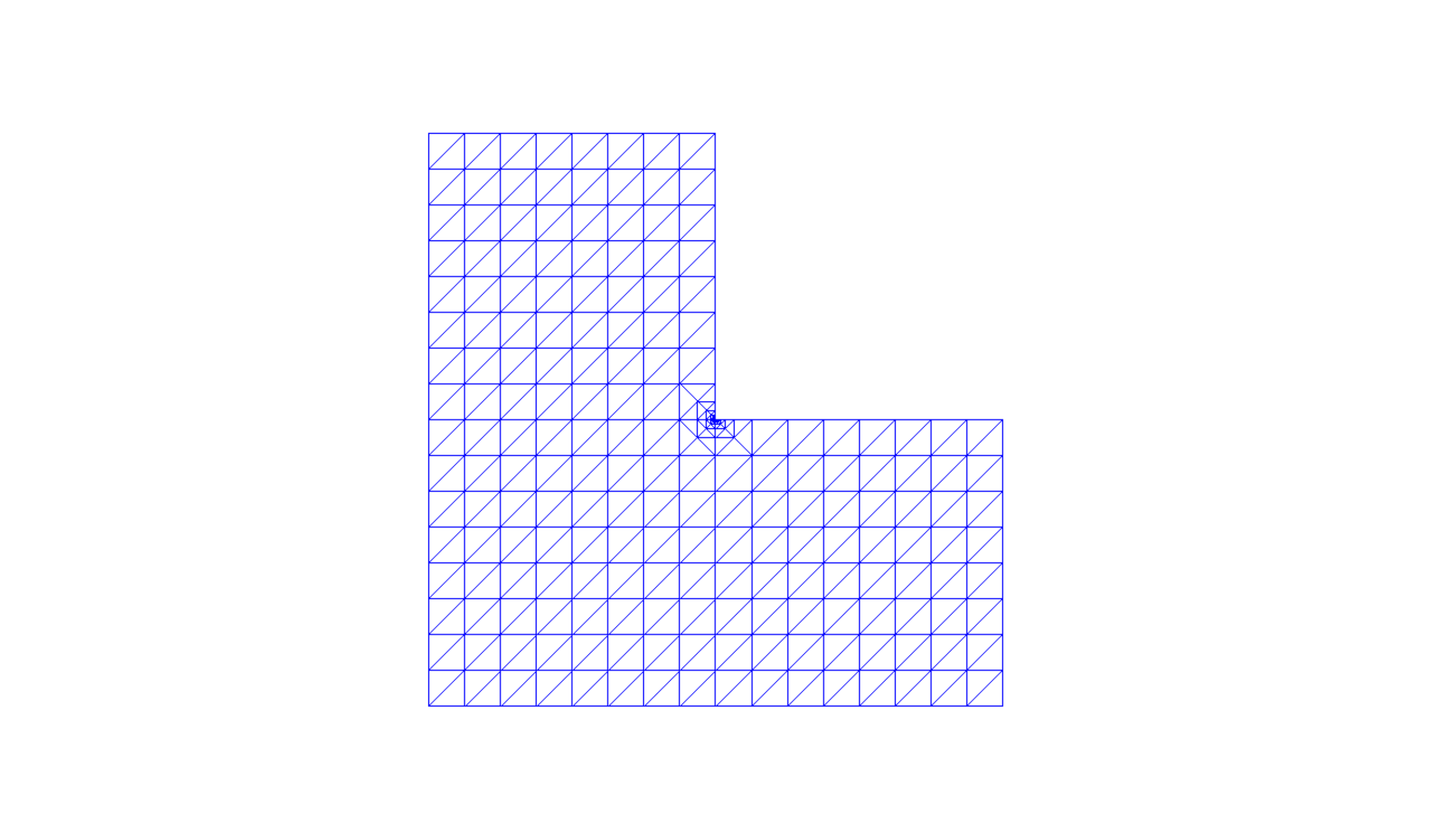}
\caption{Initial mesh, adaptive refinement after 19 steps for $k=0$ and $k=1$}
\label{fig:adamesh}
\end{figure}

\bibliographystyle{plain}
\bibliography{ref}

\end{document}